\newtheorem{theorem}{Theorem}[section]								
\newtheorem{lemma}[theorem]{Lemma}
\newtheorem{definition}[theorem]{Definition}
\newtheorem{corollary}[theorem]{Corollary}
\newtheorem{example}[theorem]{Example}
\def\part{\partial}
\def\b1{\bold 1}
\newcommand{\beq}{\begin{equation}}
\newcommand{\eeq}{\end{equation}}
\theoremstyle{remark}
\numberwithin{equation}{section}
\date{\today}
\begin{document}

\title[Fixed number of occurrences of a monotone pattern]{Permutations with a fixed number of occurrences of a monotone pattern}
\author {Michael Waite}
\address{Department of Mathematics, University of Florida, Gainesville, FL 32601}
\email{michael.waite@ufl.edu}

\begin{abstract} 
We bound the number of permutations with a fixed number $r$ of $321\ominus p_0$ patterns by a constant times the number of permutations which avoid $321 \ominus p_0$. We use this new upper bound to show that the ordinary generating function for permutations with $r$ copies of \\ $k(k-1)...1$ is not rational for odd $k\geq 3$ and not algebraic for even $k\geq 3$.
\end{abstract}

\maketitle

\section{Introduction}
We say that a permutation $p$ contains a pattern $q = q_1 ... q_k$ if there is a subsequence $n_1, ..., n_k$ so that $p_{n_i} < p_{n_j}$ if and only if $q_i < q_j$. We say that $p$ contains $r$ copies of $q$ if there are $r$ different such subsequences $n_1, ..., n_k$.

We denote the set of permutations of length $n$ that avoid $q$ as $S_n(q)$, and we denote the set of permutations with $r$ copies of $q$ as $S_{n,r}(q)$.

It is well-known that $|S_n(321)| = \frac{1}{n+1} {2n \choose n}$. A formula for $|S_{n,1}(321)|$ was first found by Noonan \cite{Noonan}, for $|S_{n,2}(321)|$ by Fulmek \cite{Fulmek}, and for $|S_{n,3}(321)|$ and $|S_{n,4}(321)|$ by Callan \cite{Callan}.

Most recently, Bóna \cite{Bona} showed that the generating function for  \\ $|S_{n,1}(k(k-1)...1)|$ is not rational for all odd $k\geq 3$ and not algebraic for all even $k\geq 3$ by constructing an injection from $S_{n,1}(k(k-1)...1)$ to \\ $S_{n+2}((k-1)k...1)$. This gives an upper bound on the size of $S_{n,1}(k(k-1)...1)$, which together with a lower bound on the size of this set gives the desired result.


We will show that the ordinary generating function for $|S_{n,r}(k(k-1)...1)|$ is not rational for all odd $k\geq 3$ and is not algebraic for all even $k\geq 3$, for all $r \geq 1$.

In Section $2$ we will carefully construct injections from $S_{n,r}(321)$ to $S_{n}(231)$. The injections in this section are similar to the bijections in \cite{Callan} but with some redundant information included. In Section $3$ we will generalize this construction to $S_{n,r}(321 \ominus p_0)$ and obtain an upper bound on the size of this set. In the case where $r=1$, we get one injection which is similar to the one in \cite{Bona}. We next get a lower bound on the size of this set, to bound $|S_{n,r}(321 \ominus p_0)|$ from above and below by a constant times $|S_{n}(321 \ominus p_0)|$. For the patterns $321$ and $4321$, this is in accordance with the conjectured asymptotics in \cite{Conway}. In Section $4$ we specialize to the case of $S_{n,r}(k(k-1)...1)$ and use known results to conclude that the generating function for these sequences are non-rational for odd $k\geq 3$ and non-algebraic for even $k\geq 3$.

\section{The pattern $321$}
We will need the following definitions.

\begin{definition}
    Let $w$ be a permutation of length $m \leq n$ on some subset of $\{ 1, ..., n\}$. Define $\text{reduce}(w)$ to be the permutation on $\{1,...,m\}$ whose entries are in the same relative order as $w$, and say that $w$ reduces to $\text{reduce}(w)$.
\end{definition}

\begin{example}
    If $w = 8452$, then $\text{reduce}(w) = 4231$.
\end{example}

\begin{definition}
    Say that $p$ is of type $q$ if the subsequence of all entries of $p$ which participate in $321$ patterns reduces to $q$. Denote $S_{n,r}^{q}(321)$ to be the set of permutations on $n$ of type $q$ with $r$ copies of $321$.
\end{definition}

\begin{example}
    Let $p = 125643$. Then $p$ has $321$ patterns $543$ and and $643$, so the subsequence of all entries that participate in $321$ patterns is $5643$. This reduces to $3421$, so $p$ is of type $3421$, and $p\in S_{6,2}^{3421}(321)$.
\end{example}



\vspace{10pt}
Now we define the map we need and prove that it is an injection.

\begin{theorem}
    Suppose $n,r \geq 0$. Let $b_1$ be the first entry of $q$ which takes the role of $2$ in a $321$ pattern. Suppose there are $c$ entries in $q$ to the left of $b_1$ which are greater than $b_1$, and $a$ entries in $q$ to the right of $b_1$ which are smaller than $b_1$. Then there exist $0 \leq s,t < r$ and an injection
    \[
    \phi^q: S_{n,r}^{q}(321) \hookrightarrow \bigcup_{i = 1}^{n} S_{i - 1 + c,s}(321) \times S_{n-i + a,t}(321).
    \]
\end{theorem}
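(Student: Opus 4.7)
The plan is to define $\phi$ by splitting $p$ at the entry distinguished by $b_1$. Let $B$ be the entry of $p$ corresponding to $b_1$, and let $k$ and $b$ be its position and value in $p$. Write $L=p_1p_2\cdots p_{k-1}$ and $R=p_{k+1}\cdots p_n$, and let $L_>$, $R_<$ be the sublists of $L$, $R$ consisting of entries larger, respectively smaller, than $b$, preserving their order from $p$. Because $B$ plays the role of $2$ in at least one $321$ pattern, there is at least one entry of $R$ below $b$; together with $B$, this entry forms a $321$ pattern with any element of $L$ above $b$. Hence every such element participates in a $321$ pattern and is therefore represented in $q$, so $|L_>|=j$; symmetrically $|R_<|=i$. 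Define
\[
\phi(p)=\bigl(\text{reduce}(L\cdot R_<),\ \text{reduce}(L_>\cdot R)\bigr),
\]
with $\cdot$ denoting concatenation; the lengths $k-1+i$ and $n-k+j$ match the theorem.

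To bound the number $s$ of $321$ patterns in the left piece, I would observe that $L$ precedes $R_<$ both in $p$ and in $L\cdot R_<$, so a $321$ pattern in the reduced left piece corresponds bijectively to a $321$ pattern of $p$ using only entries of $L\cup R_<$. In particular, $s$ counts those $321$ patterns of $p$ avoiding both $B$ and all of $R\setminus R_<$. Since $B$ appears as the middle entry in some $321$ pattern of $p$, at least one pattern is lost, giving $s\leq r-1$. The symmetric argument for the right piece yields $t\leq r-1$.

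For injectivity, I would give an explicit inverse. Knowing the union index $k$ and the fixed quantities $i,j$, recover $b=k-j+i$ by counting the entries of $p$ below $b$: these are exactly $(L\setminus L_>)\cup R_<$, of total size $(k-1-j)+i$. Since $L_>\cup R_>=\{b+1,\ldots,n\}$ and $(L\setminus L_>)\cup R_<=\{1,\ldots,b-1\}$, the $i$ smallest reduced values of the right piece are the images of $R_<$, while the first $j$ positions of the right piece hold $L_>$, whose absolute values are obtained by $m\mapsto b+m-i$ applied to their reduced values. Dually, $R_<$ occupies the final $i$ positions of the left piece, where each reduced value equals its absolute value. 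With $L_>$ and $R_<$ thus recovered as sequences of integers in $[n]$, un-reducing the first $k-1$ entries of the left piece yields $L$, un-reducing the last $n-k$ entries of the right piece yields $R$, and we assemble $p=L\cdot b\cdot R$. I expect the main difficulty to lie in organizing this un-reduction cleanly and checking consistency---in particular, that the $L_>$ absolute values extracted from the right piece agree with those implicit in the larger reduced entries of the left piece.
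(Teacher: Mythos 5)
Your proposal is correct and is essentially the paper's own proof: your left and right pieces $L\cdot R_<$ and $L_>\cdot R$ are exactly the paper's subsequences $\sigma = w_1\alpha_1\cdots\alpha_i$ and $\tau=\gamma_1\cdots\gamma_j w_2$, and your inverse (values below $b$ fixed by reduction in the left piece, values above $b$ shifted by a recoverable constant in the right piece, then swapping in the recovered $L_>$ and $R_<$) is the same un-reduction argument. The consistency check you flag at the end does go through, exactly as in the paper, because the left piece contains all of $\{1,\dots,b-1\}$ and the right piece contains all of $\{b+1,\dots,n\}$.
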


\begin{proof}
    First we will define $\phi^q$. Let $p \in S_{n,r}^{q}(321)$. Let $\beta_1$ be the first entry of $p$ which takes the role of a $2$ in a $321$ pattern of $p$, and write 
    \[
    p = w_1 \beta_1 w_2.
    \]
     Let $k$ be the position of $\beta_1$ in $p$. Let $\gamma_1, ... \gamma_c$ be the entries in $p$ which appear before $\beta_1$ and are greater than $\beta_1$ (in the order in which they occur in $p$), and let $\alpha_1, ... \alpha_a$ be the entries in $p$ which appear after $\beta_1$ and are less than $\beta_1$ (in the order in which they occur in $p$).
     \begin{figure}[H]
         \begin{tikzpicture}[scale = .6, point/.style={circle,draw=black!100,fill=black!100,thick,
                 inner sep=0pt,minimum size=1mm},
                      ]
          \draw[draw = gray!50!] (-3.5,-3.5) rectangle (3.5,3.5);
          \filldraw[fill=gray!20!, draw=black!50!] (0,0)  rectangle (3.5,3.5);
          \filldraw[fill=gray!20!, draw=black!50!] (0,0)  rectangle (-3.5,-3.5);
          \node (b) at (0,0) [point] [label=above left:$\beta_1$] {};
          \node (a1) at (-1,3) [point] [label=right:$\gamma_3$] {};
          \node (a2) at (-2,2) [point] [label=right:$\gamma_2$] {};
          \node (a3) at (-3,1) [point] [label=right:$\gamma_1$] {};
          \node (a4) at (1,-2) [point] [label=left:$\alpha_1$]{};
          \node (a5) at (2,-3) [point] [label=left:$\alpha_2$]{};
          \node (a6) at (3,-1) [point] [label=left:$\alpha_3$]{};
        \end{tikzpicture}
        \caption{An example of $p = w_1\beta_1 w_2$ with $\alpha_i$ and $\gamma_i$ entries labeled}
     \end{figure}
     
     To see that there are exactly $c$ $\gamma$'s and $a$ $\alpha$'s, note that all of these entries are in $321$ patterns with $\beta_1$, and $\beta_1$ in $p$ corresponds with $b_1$ in $q$. 
     
     \vspace{10pt}
     Now, we construct two subsequences of $p$ as follows:
    \begin{align*}
        \sigma &= w_1 \alpha_1 ... \alpha_a \\
        \tau   &= \gamma_1 ... \gamma_c w_2.
    \end{align*}
    This gives us two subsequences of $p$. 
    \begin{figure}[H]
        \centering
        \begin{tikzpicture}[scale = 0.55, point/.style={circle,draw=black!100,fill=black!100,thick,
                         inner sep=0pt,minimum size=.8mm},
                              ]
          \draw[draw = gray!50!] (-3.5,-3.5) rectangle (+2,3.5);
          \filldraw[fill=gray!20!, draw=black!50!] (0,0)  rectangle (-3.5,-3.5);
          \node (b2) at (0,0) {};
          \node (gg3) at (-1,3) [point] [label=right:$\gamma_3$] {};
          \node (gg2) at (-2,2) [point] [label=right:$\gamma_2$] {};
          \node (gg1) at (-3,1) [point] [label=right:$\gamma_1$] {};
          \node (aa1) at (+1/2,-2) [point] [label=right:$\alpha_1$]{};
          \node (aa2) at (+2/2,-3) [point] [label=right:$\alpha_2$]{};
          \node (aa3) at (+3/2,-1) [point] [label=left:$\alpha_3$]{};
          \draw[draw = gray!50!] (6-2,-3.5) rectangle (6+3.5,3.5);
          \filldraw[fill=gray!20!, draw=black!50!] (6,0)  rectangle (6+3.5,+3.5);
          \node (b3) at (6,0) {};
          \node (ggg3) at (6-1/2,3) [point] [label=left:$\gamma_3$] {};
          \node (ggg2) at (6-2/2,2) [point] [label=left:$\gamma_2$] {};
          \node (ggg1) at (6-3/2,1) [point] [label=right:$\gamma_1$] {};
          \node (aaa1) at (6+1,-2) [point] [label=left:$\alpha_1$]{};
          \node (aaa2) at (6+2,-3) [point] [label=left:$\alpha_2$]{};
          \node (aaa3) at (6+3,-1) [point] [label=left:$\alpha_3$]{};
    \end{tikzpicture}
        \caption{An example of $\sigma$ and $\tau$}
    \end{figure}
    
    Note that $\sigma$ is of size $k-1 + a$ and $\tau$ is of size $n-k+c$, where $k$ is the position of $\beta_1$ in $p$. Also note that $\sigma$ and $\tau$ do not contain $\beta_1$, so each must have at least one fewer $321$ pattern than $p$. Furthermore, the number of $321$ patterns that $\sigma$ has is exactly the number of $321$ patterns that $q$ has which only include entries before $b_1$, and entries after $b_1$ and smaller than $b_1$; let $s$ be this number. Similarly the number of $321$ patterns that $\tau$ has is exactly the number of $321$ patterns that $q$ has which only include entries before $b_1$ and larger than $b_1$, and entries after $b_1$; let $t$ be this number.

    Now, set
    \[
    \phi^q(p) = (\text{reduce}(\sigma), \text{reduce}(\tau)).
    \]

    Note that $\text{reduce}(\sigma)\in S_{k - 1 + a,s}(321)$ and that $\text{reduce}(\tau) \in S_{n-k + c,t}(321)$, as we expect.
    
    \vspace{10pt}
    Furthermore, note that the only entries in $\sigma$ which have value greater than $\beta_1$ are the $\gamma_i$ entries, and the only entries in $\tau$ which have value smaller than $\beta_1$ are the $\alpha_i$ entries. So when we reduce $\sigma$ and $\tau$, all of the values of $\sigma$ which were not $\gamma_i$ are unchanged, and all of the values of $\tau$ which were not $\alpha_i$ are unchanged (up to a constant shift).

    \begin{figure}[H]
        \centering

    \vspace{30pt}
        \begin{tikzpicture}[scale = 0.6, point/.style={circle,draw=black!100,fill=black!100,thick,
                     inner sep=0pt,minimum size=.8mm},graypoint/.style={circle,draw=black!50,fill=black!50,thick,
                     inner sep=0pt,minimum size=.8mm}
                          ]
      \draw[draw = gray!50!] (-3.5,-3.5) rectangle (+2,2);
      \filldraw[fill=gray!20!, draw=black!50!] (0,0)  rectangle (-3.5,-3.5);
      \node (b2) at (0,0) {};
      \node (gg3) at (-1,3/2) [point]  {};
      \node (gg2) at (-2,2/2) [point]  {};
      \node (gg1) at (-3,1/2) [point]  {};
      \node (aa1) at (+1/2,-2) [point] [label=right:$\alpha_1$]{};
      \node (aa2) at (+2/2,-3) [point] [label=right:$\alpha_2$]{};
      \node (aa3) at (+3/2,-1) [point] [label=left:$\alpha_3$]{};
      \draw[draw = gray!50!] (6-2,-2-1.5) rectangle (6+3.5,3.5-1.5);
      \filldraw[fill=gray!20!, draw=black!50!] (6,0-1.5)  rectangle (6+3.5,+3.5-1.5);
      \node (b3) at (6,0) {};
      \node (ggg3) at (6-1/2,3-1.5) [point] [label=left:$\gamma_3$] {};
      \node (ggg2) at (6-2/2,2-1.5) [point] [label=left:$\gamma_2$] {};
      \node (ggg1) at (6-3/2,1-1.5) [point] [label=right:$\gamma_1$] {};
      \node (aaa1) at (6+1,-2/2-1.5) [point] {};
      \node (aaa2) at (6+2,-3/2-1.5) [point] {};
      \node (aaa3) at (6+3,-1/2-1.5) [point] {};
      \node (n2) at (0,-4) [label = below: $\text{reduce}(\sigma)$]{};
      \node (n3) at (6,-4) [label = below: $\text{reduce}(\tau)$]{};
    \end{tikzpicture}
        \caption{An example of $\text{reduce}(\sigma)$ and $\text{reduce}(\sigma)$.}
    \end{figure}

    This means that $\text{reduce}(\sigma)$ contains the positions of all of the $\gamma_i$ and the values of the $\alpha_i$, and $\text{reduce}(\tau)$ contains the positions of the $\alpha_i$ and the values of the $\gamma_i$ (up to a constant shift). So $\text{reduce}(\sigma)$ and $\text{reduce}(\tau)$ record the positions and values of the $\gamma_i$ and $\alpha_i$ entries of $p$.

    Also notice that the positions or values of the entries in $w_1$ in $\sigma$ which were not $\gamma_i$ entries are not changed at all, and the positions and values of the entries in $w_2$ which were not $\alpha_i$ entries in $\tau$ are not changed at all (up to a constant shift). So $\text{reduce}(\sigma)$ and $\text{reduce}(\tau)$ also record the positions and values of the entries which are not $\gamma_i$ or $\alpha_i$ or $\beta_1$ entries in $p$.

    \vspace{10pt}
    So to see that $\phi^q$ is injective, suppose $p\neq p'$. Then $p$ and $p'$ must disagree either in their $\alpha_i$ entries (in position or value), their $\gamma_i$ entries (in position or value), or some entry that is not a $\gamma_i$, $\alpha_i$, or $\beta_1$ (since it is not possible for $p$ and $p'$ to disagree in only one location). But then $\phi^q(p) = \phi^q(p')$ cannot be true as $\phi^q(p)$ and $\phi^q(p')$ will record the positions and values for the entries that differ, and will thus be different permutations.
 
\end{proof}

\vspace{10pt}

With this first map defined, we can define another better map.
\begin{theorem}
    Suppose $n,r\geq 0$. Then there is a constant $m_q$ and an injection
    \[
    \psi^q(p):S_{n,r}^q(321)\rightarrow S_{n+m_q}(231).
    \]
\end{theorem}

\begin{proof}
    Let $p \in S_{n,r}^q(321)$. Let $p = w_1 \beta_1 w_2$ with $\beta_1$ and  
\begin{align*}
    \sigma &= w_1 \alpha_1 ... \alpha_a \\
    \tau   &= \gamma_1 ... \gamma_c w_2
\end{align*}
as before. Applying $\phi^q$ to $p$, we get two permutations
\begin{align*}
    \text{reduce}(\sigma) &= w_1' \alpha_1 ... \alpha_a \\
    \text{reduce}(\tau)   &= \gamma_1' ... \gamma_c' w_2'.
\end{align*}
\begin{figure}[H]
    \centering
    \begin{tikzpicture}[scale = 0.5, point/.style={circle,draw=black!100,fill=black!100,thick,
                 inner sep=0pt,minimum size=.8mm},graypoint/.style={circle,draw=black!50,fill=black!50,thick,
                 inner sep=0pt,minimum size=.8mm}
                      ]
  \draw[draw = gray!50!] (-3.5,-3.5) rectangle (+2,2);
  \filldraw[fill=gray!20!, draw=black!50!] (0,0)  rectangle (-3.5,-3.5);
  \node (b2) at (0,0) {};
  \node (gg3) at (-1,3/2) [point]  {};
  \node (gg2) at (-2,2/2) [point]  {};
  \node (gg1) at (-3,1/2) [point]  {};
  \node (aa1) at (+1/2,-2) [point] [label=right:$\alpha_1$]{};
  \node (aa2) at (+2/2,-3) [point] [label=right:$\alpha_2$]{};
  \node (aa3) at (+3/2,-1) [point] [label=left:$\alpha_3$]{};
  \draw[draw = gray!50!] (6-2,-2-1.5) rectangle (6+3.5,3.5-1.5);
  \filldraw[fill=gray!20!, draw=black!50!] (6,0-1.5)  rectangle (6+3.5,+3.5-1.5);
  \node (b3) at (6,0) {};
  \node (ggg3) at (6-1/2,3-1.5) [point] [label=left:$\gamma_3$] {};
  \node (ggg2) at (6-2/2,2-1.5) [point] [label=left:$\gamma_2$] {};
  \node (ggg1) at (6-3/2,1-1.5) [point] [label=right:$\gamma_1$] {};
  \node (aaa1) at (6+1,-2/2-1.5) [point] {};
  \node (aaa2) at (6+2,-3/2-1.5) [point] {};
  \node (aaa3) at (6+3,-1/2-1.5) [point] {};
  \node (n2) at (0,-4) [label = below: $\text{reduce}(\sigma)$]{};
  \node (n3) at (6,-4) [label = below: $\text{reduce}(\tau)$]{};
\end{tikzpicture}
    \caption{An example of $\text{reduce}(\sigma)$ and $\text{reduce}(\tau)$}

\end{figure}
Now, get another permutation $\text{reduce}(\tau)'$ by adding $|\sigma|$ to every entry in $\text{reduce}(\tau)$, to get
\[
\phi_2(p)'   = \gamma_1'' ... \gamma_c'' w_2''.
\]
\\

We will now replace $p$ with a new permutation in the following steps:
\\
\\
1. Replace $w_1$ with $w_1'$.
\\
2. Replace $\beta_1$ with $\alpha_1 ... \alpha_a \ | \ \gamma_1'' ... \gamma_c''$, where $|$ is a symbol which will later be replaced by a value.
\\
3. Replace $w_2$ with $w_2''$.
\\
\\
We will call this new permutation $\varphi^q(p)$
\begin{figure}[H]
    \centering
        \begin{tikzpicture}[scale = 0.5, point/.style={circle,draw=black!100,fill=black!100,thick,
                     inner sep=0pt,minimum size=.8mm},graypoint/.style={circle,draw=black!50,fill=black!50,thick,
                     inner sep=0pt,minimum size=.8mm},divider/.style={circle,draw=black!50,fill=black!0,thick,
                     inner sep=0pt,minimum size=1.2mm}
                          ]
      \draw[draw = gray!50!] (-3.5,-3.5) rectangle (+2,2);
      \filldraw[fill=gray!20!, draw=black!50!] (0,0)  rectangle (-3.5,-3.5);
      \node (b2) at (0,0) {};
      \node (gg3) at (-1,3/2) [point]  {};
      \node (gg2) at (-2,2/2) [point]  {};
      \node (gg1) at (-3,1/2) [point]  {};
      \node (aa1) at (+1/2,-2) [point] [label=right:$\alpha_1$]{};
      \node (aa2) at (+2/2,-3) [point] [label=right:$\alpha_2$]{};
      \node (aa3) at (+3/2,-1) [point] [label=left:$\alpha_3$]{};
      \draw[draw = gray!50!] (6-2-2,-2-1.5+5.5) rectangle (6+3.5-2,3.5-1.5+5.5);
      \filldraw[fill=gray!20!, draw=black!50!] (6-2,0-1.5+5.5)  rectangle (6+3.5-2,+3.5-1.5+5.5);
      \node (b3) at (6,0) {};
      \node (ggg3) at (6-1/2-2,3-1.5+5.5) [point] [label=left:$\gamma_3''$] {};
      \node (ggg2) at (6-2/2-2,2-1.5+5.5) [point] [label=left:$\gamma_2''$] {};
      \node (ggg1) at (6-3/2-2,1-1.5+5.5) [point] [label=right:$\gamma_1''$] {};
      \node (aaa1) at (6+1-2,-2/2-1.5+5.5) [point] {};
      \node (aaa2) at (6+2-2,-3/2-1.5+5.5) [point] {};
      \node (aaa3) at (6+3-2,-1/2-1.5+5.5) [point] {};
      \node[divider] (d1) at (2,7.5) [label=left:$\text{Divider}$]{}; 
    \end{tikzpicture}
    \caption{An example of $\varphi^q(p)$}
\end{figure}

For the pattern $321$, we can view this operation as just taking $\text{reduce}(\sigma)$ and putting a copy of $\text{reduce}(\tau)$ above and to the right of it, with a divider in-between, but for the more general case it will be important to have the entry-wise replacement steps outlined above.

\vspace{10pt}
Now, we will prove that in all of these replacements, an entry in $p$ is only replaced by a smaller entry in $\varphi^q(p)$ if there was already an entry at least that small somewhere to the right in $p$.

First, the only entries in $w_1$ which are smaller in $w_1'$ were originally larger than $\beta_1$, and since $w_1$ contains all of $1,...,\beta_1 - 1$, then these entries are decreased to values at least $\beta_1$. But $\beta_1$ is to the right of $w_1$ in $p$.

Second, all of $\alpha_1 ... \alpha_a$ are smaller than $\beta_1$. But in $p$ all of these values were to the right of $\beta_1$. All of $\gamma_1'' ... \gamma_c''$ have value strictly greater than $|w_1| + |w_2| - c = n - 1 - c$, and must have larger value than $\beta_1$ since the value of $\beta$ is at most $n-1-c$. So none of those replacements could be decreases.

Third, if the largest entry of $\text{reduce}(\tau) = \gamma_1' ... \gamma_c' w_2'$ is $n-C$, then adding $C$ to each of these entries will make them each at least as large as their corresponding entry in $\gamma_1 ... \gamma_c w_2$. We know that $C = n - |w_2| - c = |w_1| + a = |\sigma|$. Thus each entry of $w_2''$ must be at least as large as the corresponding entry of $w_2$.

So the claim is proved.
\\

We also claim that $\text{type}(\varphi^q(p)) = \text{type}(\varphi^q(q))$ for any permutation $p$ of type $q$.

Indeed, for any $p$ of type $q$ then $\text{type}(\varphi^q(p))$ will be 
\[
\text{type}(w_1\alpha_1 ... \alpha_a) \oplus \text{type}(\gamma_1 ... \gamma_c w_2).
\]
But every entry in a $321$ pattern in $w_1\alpha_1 ... \alpha_a$ or $\gamma_1 ... \gamma_c w_2$ had to have been an entry of a $321$  pattern in $p$. So we can remove all entries of $w_1$ which do not participate in $321$ patterns in $p$ to get $v_1$ and likewise with $w_2$ to get $v_2$, and see that $\text{type}(\varphi^q(p))$ is the same as
\[
\text{type}(v_1\alpha_1 ... \alpha_a) \oplus \text{type}(\gamma_1 ... \gamma_c v_2).
\]
Notice now that $q = \text{reduce}(v_1 \beta_1 v_2)$ and that $\text{type}(q)$ is exactly the above expression, and we are done.
\\

This gives a permutation on all of the values $1, 2, ..., n+a+c-1$, with $1$ divider in it.

Now we redefine $p$ to be this new permutation and apply $\varphi^{\text{type}(p)}$ to $p$, with the only difference being that there is a divider and we completely ignore it and leave it where it is.

We repeat this process until there is no $321$ pattern, say after $j$ iterations. Let $m_q =\sum_{i=1}^j a_i + c_i$, where $a_i$ and $c_i$ are the $a$ and $c$ values obtained on the $i$th iteration of $\varphi$ on $q$, and define $N = n + m_q$. 

Then we get a permutation on $N - j$ entries with $j$ dividers in it, of the form
\[
p_1 \ | \ p_2 \ | \ ... \ | \ p_{j} \ | \ p_{j+1}
\]

where notably each all of the entries before each divider are smaller than all of the entries after the divider. Also, by construction, each $p_i$ avoids $321$.

\begin{figure}[H]
    \centering
    \begin{tikzpicture}[scale = 0.4, point/.style={circle,draw=black!100,fill=black!100,thick,
                     inner sep=0pt,minimum size=.8mm},graypoint/.style={circle,draw=black!50,fill=black!50,thick,
                     inner sep=0pt,minimum size=.8mm},divider/.style={circle,draw=black!50,fill=black!0,thick,
                     inner sep=0pt,minimum size=1.2mm}
                          ]
      \draw[draw = gray!50!] (-3.5,-3.5) rectangle (+2,2);
      \draw[draw = gray!50!] (6-2-2,-2-1.5+5.5) rectangle (6+3.5-2-2,3.5-1.5+5.5-1/2);
      \draw[draw = gray!50!] (6+3.5-2-2,3.5-1.5+5.5-1/2) rectangle (7.5+2.5,7.5+2.5);
    
      \node (b1) at (-1.5/2,-1.5/2) [label=center:$p_1$]{};
      \node (b2) at (7.5/2,9/2) [label=center:$p_2$]{};
      \node (b3) at (15.5/2,17/2) [label=center:$p_3$]{};
      
      \node[divider] (d1) at (2,10) [label=above:$\text{Divider}$]{}; 
      \node[divider] (d1) at (5.5,10) [label=above:$\text{Divider}$]{}; 
    \end{tikzpicture}
    \caption{An example output with $p_1,p_2,p_3$ avoiding $321$}
\end{figure}

\vspace{10pt}
Now we will apply the reverse Simion-Schmidt map $s:S_n(321)\rightarrow S_n(231)$ to each $p_i$. 

Note that $s$ fixes the right-to-left minima of each $p_i$, so that as with the other replacements, every entry that we replace by a smaller entry had an even smaller entry to the right originally (namely the nearest right-to-left minimum).

Thus after this, we get a sequence
\[
s(p_1) \ | \ s(p_2) \ | \ ... \ | \ s(p_{j}) \ | \ s(p_{j+1}).
\]
\begin{figure}[H]
    \centering
    \begin{tikzpicture}[scale = 0.4, point/.style={circle,draw=black!100,fill=black!100,thick,
                     inner sep=0pt,minimum size=.8mm},graypoint/.style={circle,draw=black!50,fill=black!50,thick,
                     inner sep=0pt,minimum size=.8mm},divider/.style={circle,draw=black!50,fill=black!0,thick,
                     inner sep=0pt,minimum size=1.2mm}
                          ]
      \draw[draw = gray!50!] (-3.5,-3.5) rectangle (+2,2);
      \draw[draw = gray!50!] (6-2-2,-2-1.5+5.5) rectangle (6+3.5-2-2,3.5-1.5+5.5-1/2);
      \draw[draw = gray!50!] (6+3.5-2-2,3.5-1.5+5.5-1/2) rectangle (7.5+2.5,7.5+2.5);
    
      \node (b1) at (-1.5/2,-1.5/2) [label=center:$s(p_1)$]{};
      \node (b2) at (7.5/2,9/2) [label=center:$s(p_2)$]{};
      \node (b3) at (15.5/2,17/2) [label=center:$s(p_3)$]{};
      
      \node[divider] (d1) at (2,10) [label=above:$\text{Divider}$]{}; 
      \node[divider] (d1) at (5.5,10) [label=above:$\text{Divider}$]{}; 
    \end{tikzpicture}
    \caption{The example from Figure 8 after applying $s$}
\end{figure}
Finally, we will replace the dividers from left to right with $N,N-1, ..., N-j+1$, and define this permutation to be $\psi^q(p)$. So now we obtain a permutation
\[
s(p_1) \ N \ s(p_2) \ N-1 \ ... \ N-j+2 \ s(p_{j}) \ N-j+1 \ s(p_{j+1}).
\]

\begin{figure}[H]
    \centering
    \begin{tikzpicture}[scale = 0.5, point/.style={circle,draw=black!100,fill=black!100,thick,
                     inner sep=0pt,minimum size=.8mm},graypoint/.style={circle,draw=black!50,fill=black!50,thick,
                     inner sep=0pt,minimum size=.8mm},divider/.style={circle,draw=black!50,fill=black!0,thick,
                     inner sep=0pt,minimum size=1.2mm}
                          ]
      \draw[draw = gray!50!] (-3.5,-3.5) rectangle (+2,2);
      \draw[draw = gray!50!] (6-2-2,-2-1.5+5.5) rectangle (6+3.5-2-2,3.5-1.5+5.5-1/2);
      \draw[draw = gray!50!] (6+3.5-2-2,3.5-1.5+5.5-1/2) rectangle (7.5+2.5,7.5+2.5);
    
      \node (b1) at (-1.5/2,-1.5/2) [label=center:$s(p_1)$]{};
      \node (b2) at (7.5/2,9/2) [label=center:$s(p_2)$]{};
      \node (b3) at (15.5/2,17/2) [label=center:$s(p_3)$]{};
      
      \node[point] (d1) at (2,11) [label=above:$N$]{}; 
      \node[point] (d1) at (5.5,10) [label=above:$N-1$]{}; 
    \end{tikzpicture}
    \caption{An example of $\psi^q(p)$}
    \label{fig:placeholder}
\end{figure}

Note that since each $s(p_i)$ avoids $231$ and the $s(p_i)$ form an increasing sequence of blocks of entries and the dividers are replaced by a decreasing sequence of largest entries, then the permutation $\psi^q(p)$ avoids $231$.
Thus $\psi^q(p) \in S_{n+m_q}(231)$.

\vspace{10pt}
Now we need to prove that $\psi^q$ is injective. Suppose $p,p'\in S_{n,r}^q(p)$ and $p\neq p'$. We can view each application of a $\varphi$ in $\psi^q$ as applying the appropriate $\phi$ map, then shifting some values by a constant and putting in a divider. We know that $\phi^q$ is injective, so $\varphi^q(p)$ and $\varphi^q(p')$ will be different. But since $p,p'\in S_{n,r}^q(p)$, then the type of each permutation after this step will both be the same, say it is $q_2$. Then we repeat by applying $\varphi^{q_2}$ and once again the two permutations we get will have to be different since $\phi^{q_2}$ is injective. We repeat until we get two permutations with dividers as
\begin{align*}
p_1 \ &| \ p_2 \ | \ ... \ | \ p_{j} \ | \ p_{j+1} \\
p_1' \ &| \ p_2' \ | \ ... \ | \ p_{j}' \ | \ p_{j+1}'
\end{align*}
where $j$ is the same in both cases since $j$ is determined by $q$. Since both of these are different, then there will have to be some $i$ so that $p_i \neq p_i'$. We apply the reverse Simion-Schmidt map which is a bijection, so we get two outputs
\begin{align*}
s(p_1) \ &| \ s(p_2) \ | \ ... \ | \ s(p_{j}) \ | \ s(p_{j+1}) \\
s(p_1') \ &| \ s(p_2') \ | \ ... \ | \ s(p_{j}') \ | \ s(p_{j+1}').
\end{align*}
which must be different since $s(p_i) \neq s(p_i')$ for some $i$.
Finally we replace the dividers by decreasing largest entries to get
\begin{align*}
s(p_1) \ &N \ s(p_2) \ N + 1 \ ... \ N - j + 2 \ s(p_{j}) \ N-j+1 \ s(p_{j+1}) \\
s(p_1') \ &N \ s(p_2') \ N + 1 \ ... \ N - j + 2 \ s(p_{j}') \ N-j+1 \ s(p_{j+1}').
\end{align*}
The process of replacing the dividers by largest entries is injective since, given the resulting permutations, we can simply remove the $j$ largest entries and replace them with dividers. So these two permutations must be different. Hence $\psi^q(p) \neq \psi^q(p')$.
\end{proof}

\section{Generalizing to $321 \ominus p_0$}
First, a definition:
\begin{definition}
    Say that an entry $a$ in a permutation $p$ \emph{dominates} $p_0$ if there is a $p_0$-pattern to the right of $a$ with values all smaller than $a$.
\end{definition}

\begin{example}
    Let $p = 625341$. Then $5$ dominates the pattern $231$ since $5$ is to the left of $341$, but $2$ does not dominate any $231$.
\end{example}

We will now prove the following.
\\
\begin{theorem}
    There is an injection 
    \[
    \Psi^q:S^q_{n,r}(321 \ominus p_0) \rightarrow S_{n + m_{\overline{q}}}(231 \ominus p_0)
    \]
    for some constant $m_{\overline{q}}$, where $\overline{q}$ is the reduction of the subsequence of entries of $p$ which participate in a $321$ pattern of entries dominating $p_0$.
\end{theorem}
\begin{proof}
    Suppose $p \in S_{n,r}^q(321 \ominus p_0)$. Each $321 \ominus p_0$ pattern consists of a $321$ pattern which dominates a $p_0$ pattern. Let $\overline{p}$ be the subsequence of entries of $p$ which dominate $p_0$. Each $321$ pattern in $\overline{p}$ corresponds with at least $1$ $321 \ominus p_0$ pattern in $p$. The number of $321$ patterns in $\overline{p}$ can be determined by looking at $\overline{q}$, and will be some $1 \leq \overline{r} \leq r$. So $\overline{p} \in S_{\overline{n},\overline{r}}^{\overline{q}}(321)$ for some $\overline{n} \leq n$.
    
    To define our map, all we need to do is apply $\psi^{\overline{q}}$ to $\overline{p}$ within $p$. Since $\psi^{\overline{q}}$ is defined by replacing entries with entries or blocks of entries, we can make these exact same replacements in place within $p$.
    
    The only thing left in order to define $\Psi^q$ is to determine what set of values should we use to make these replacements. Let $T$ be the set of values of entries in $p$ which are not in $\overline{p}$, and let $S = \mathbb{Z}_{\geq 0} \backslash T$.
    
    Let $\Psi^q(p)$ be the permutation given by applying $\psi^{\overline{q}}$ to $\overline{p}$ within $p$ using the set of values $S$.
    \\

    We know that $\psi^q$ has the property that in each step it only decreases entries to a value if there was originally something even smaller to the right of that position. So applying $\psi^q$ to the subsequence of entries dominating $p_0$ means that we only decrease a value dominating $p_0$ if there was something originally something even smaller to the right of that position, which also dominates $p_0$. Choosing a copy of $p_0$ small enough and far enough to the right so that it does not dominate another copy of $p_0$ ensures that $\Psi^q$ does not modify it. So the entry which we have decreased will continue to dominate this copy of $p_0$. So it is impossible for any entry to dominate $p_0$ and be replaced by an entry no longer dominating $p_0$. 
    
    Similarly, it is also impossible for something which did not dominate $p_0$ to begin dominating $p_0$ after a replacement due to some entries after it being decreased, since there would have to be some copy of $p_0$ far enough to the right and small enough to have been unmodified by $\Psi^q$, but then the entry would have dominated this $p_0$ to begin with. 
    
    Since $\Psi^q$ does not modify any entries which do not dominate $p_0$, this shows that the subsequence of entries which we get from $\psi^{\overline{q}}(\overline{p})$ will be exactly the entries in $\Psi^q(p)$ which dominate $p_0$. 
    \\

    Since $\psi^{\overline{q}}(\overline{p})$ avoids $231$, then this means $\Psi^q(p)$ avoids $231 \ominus p_0$. To recover $p$ from $\Psi^q(p)$, simply apply $(\psi^{\overline{q}})^{-1}$ to the subsequence of entries of $\Psi^q(p)$ which dominate $p_0$.
\end{proof}

\vspace{10pt}
To get the upper bound we want, first we need a lemma that is well-known.
\begin{lemma}
    For all $k\geq 3$, the equality
    \[
    |S_n(321 \ominus p_0)| = |S_n(231 \ominus p_0)|
    \]
    holds.
\end{lemma}
\begin{proof}
    Given $p \in S_n(321 \ominus p_0)$, apply the (reverse) Simion-Schmidt map $s$ to the subsequence of entries of $p$ that dominate $p_0$.
\end{proof}
Now we can prove the following.
\begin{theorem}
    Let $p_0$ be a permutation of any size (including $0$). Then there is a constant $K_r$ so that 
    \[
    |S_{n,r}(321\ominus p_0)| \leq K_r |S_{n}(321\ominus p_0)|.
    \]
\end{theorem}
\begin{proof}
    We know that for each possible $q$, we have
    \[
    |S^q_{n,r}(321\ominus p_0)| \leq |S_{n + m_{\overline{q}}}(231\ominus p_0)|.
    \]
    Applying Lemma $3.4$, this gives us 
    \[
    |S^q_{n,r}(321\ominus p_0)| \leq |S_{n + m_{\overline{q}}}(321\ominus p_0)|.
    \]
    Now $|S_{n + m_{\overline{q}}}(321\ominus p_0)| \leq K^{(1)}_r |S_{n}(321\ominus p_0)| $ for some constant $K^{(1)}_r$. So
    \[
    |S^q_{n,r}(321\ominus p_0)| \leq K^{(1)}_r |S_{n}(321\ominus p_0)|.
    \]
    Since $q$ is a permutation that has $r$ copies of $321\ominus p_0$ where every entry of $q$ participates in a $321\ominus p_0$ pattern, then $|q| \leq r(3 + |p_0|)$. So there are at most $1! + 2! + ... + (r(3 + |p_0|))!$ choices for $q$. Set $K^{(2)}_r = 1! + 2! + ... + (r(3 + |p_0|))!$. Now we get
    \[
    |S_{n,r}(321\ominus p_0)| \leq K^{(1)}_r K^{(2)}_r |S_{n}(321\ominus p_0)|.
    \]
    Setting $K_r = K^{(1)}_r K^{(2)}_r$, we get the result.
\end{proof}

\vspace{10pt}
The following is straightforward but we include a proof to be thorough.
\begin{lemma}
    Let $p_0$ be a permutation of any size (including $0$). Then there is a constant $K_r$ so that 
    \[
    |S_{n,r}(321\ominus p_0)| \geq k_r |S_{n}(321\ominus p_0)|
    \]
    for all $n> r(3 + |p_0|)$.
\end{lemma}
\begin{proof}
    Let $p\in S_{n-(3 + |p_0|),r-1}(321 \ominus p_0)$. Then $p\oplus (321 \ominus p_0) \in S_{n,r}(321\ominus p_0)$. Hence
    \[
    |S_{n,r}(321\ominus p_0)| \geq |S_{n-(3 + |p_0|),r-1}(321 \ominus p_0)|.
    \]
    Iterating this yields
    \[
    |S_{n,r}(321\ominus p_0)| \geq |S_{n-r(3 + |p_0|)}(321 \ominus p_0)|.
    \]
    Now choose $k_r$ so that $|S_{n-r(3 + |p_0|)}(321 \ominus p_0)| \geq k_r|S_{n}(321 \ominus p_0)|$ for all $n > r(3 + |p_0|)$. This gives
    \[
    |S_{n,r}(321\ominus p_0)| \geq k_r |S_{n}(321\ominus p_0)|.
    \]
\end{proof}

The following theorem summarizes what we have shown.
\begin{theorem}
    Let $p_0$ be a permutation of any size, and let $r \geq 1$. Then there are constants $k_r, K_r$ so that for all $n$ sufficiently large we have
    \[
    k_r |S_{n}(321\ominus p_0)| \leq |S_{n,r}(321\ominus p_0)| \leq K_r |S_{n}(321\ominus p_0)|.
    \]
\end{theorem}

\section{Results for monotone patterns}
This first lemma is provided in \cite{Bona} and the proof is due to Bostan.
\begin{lemma}
    Let $f(z) = \sum_{n\geq 0} f_n z^n$ be a power series with nonnegative real coefficients that is analytic at the origin. Assume that $c,C,\gamma$, and $\alpha$ exist so that $\alpha \leq -1$ is an integer and for all (sufficiently large) positive integers $n$ the chain of inequalities
    \[
    cn^\alpha \gamma^n \leq f_n \leq Cn^\alpha \gamma^n
    \]
    holds. Then $f(z)$ is not an algebraic power series.
\end{lemma}

\vspace{10pt}
This second lemma is well-known.
\begin{lemma}
    Let $f(z) = \sum_{n\geq 0} f_n z^n$ be a power series with nonnegative real coefficients that is analytic at the origin. Assume that $c,C,\gamma$, and $\alpha$ exist so that $\alpha$ is not a non-negative integer and for all (sufficiently large) positive integers $n$ the chain of inequalities
    \[
    cn^\alpha \gamma^n \leq f_n \leq Cn^\alpha \gamma^n
    \]
    holds. Then $f(z)$ is not a rational power series.
\end{lemma}
The following asymptotic result is due to Regev and will let us get new results

\begin{theorem}[Regev 1981 \cite{Regev}]
    For all $k\geq 2$ there is $C_k$ such that
    \[
    |S_n(k(k-1)...1)| \sim C_k \frac{(k-1)^{2n}}{n^{(k^2-2k)/2}}.
    \]
\end{theorem}

\vspace{10pt}
By setting $q = k(k-1)...1$ in Theorem $3.6$ and applying the lemmas, we get the following result.
\begin{corollary}
    Suppose $k \geq 3$. There is $c_{k,r}$ and $C_{k,r}$ such that
    \[
    c_{k,r} \frac{(k-1)^{2n}}{n^{((k^2-2k)/2)}} \leq |S_{n,r}(k(k-1)...1)| \leq C_{k,r} \frac{(k-1)^{2n}}{n^{((k^2-2k)/2)}}.
    \]
    Let $S_r(z) = \sum_{n\geq 0} |S_{n,r}(q)| z^n$. Then:
        \begin{itemize}
            \item If $k$ is odd, then $S_r(z)$ is not rational.
            \item If $k$ is even, then $S_r(z)$ is not algebraic.
        \end{itemize}
\end{corollary}

\begin{proof}
    Theorem $3.6$ tells us that for $n$ sufficiently large, there are constants $k_r$ and $K_r$ so that
    \[
    k_r|S_n(k...1)| \leq |S_{n,r}(k...1)| \leq K_r|S_n(k...1)|.
    \]
    By Theorem $4.3$, there are constants $c_k$ and $C_k$ such that 
    \[
    c_k \frac{(k-1)^{2n}}{n^{((k^2-2k)/2)}} \leq |S_n(k...1)| \leq C_k \frac{(k-1)^{2n}}{n^{((k^2-2k)/2)}}.
    \]
    Combining these and setting $c_{k,r} = c_k k_r$ and $C_{k,r} = C_k K_r$ gives
    \[
    c_{k,r} \frac{(k-1)^{2n}}{n^{((k^2-2k)/2)}} \leq |S_{n,r}(k(k-1)...1)| \leq C_{k,r} \frac{(k-1)^{2n}}{n^{((k^2-2k)/2)}}.
    \]
    Now applying Lemma $4.2$ for odd $k$ and Lemma $4.1$ for even $k$ gives that $S_r(z)$ is not rational for odd $k$ and not algebraic for even $k$.
    
\end{proof}

\end{document}